\numberwithin{equation}{section}
\newcommand{\RR} {\mathbb R}
\newcommand{\beq} {\begin{equation}}
\newcommand{\eeq} {\end{equation}}
\theoremstyle{plain}
\newtheorem{theorem}{Theorem} [section]
\newtheorem{lemma}{Lemma} [section]
\newtheorem{corollary}{Corollary}[section]
\newenvironment{proof}[1][Proof]{\begin{trivlist}
\item[\hskip \labelsep {\bfseries #1}]}{\end{trivlist}}
\title{ {Existence and Scattering for Solutions to Semilinear Wave Equations on High Dimensional Hyperbolic Space}}
\author{Amanda French}
\date{ }
\begin{document}
\maketitle
\begin{abstract} We prove small-data global existence to semi-linear wave equations on hyperbolic space of dimension $n \geq 3$, for nonlinearities that have the form of a sufficiently high integer power of the solution. We also prove the existence and asymptotic completeness of wave operators in this setting.
\end{abstract}
\section{Introduction} The semiliear wave equation
\beq \label{wave}  \Box u = F(u)
\eeq
with Cauchy data 
\beq u(0, \cdot) = f(x), \partial_t u (0, \cdot) = g(x)
\eeq
where $F(u)$ has the form
\beq F(u) = a |u|^b
\eeq 
has been extensively studied on $\mathbb{R}^{n+1}$. In a number of works including \cite{Georgiev}, \cite{John}, \cite{LS}, \cite{Strauss}, and \cite{Tataru}, it was proved that \eqref{wave} has a small-data global solution when $b$ exceeds $p$  the positive square root of the quadratic
\beq (n-1)p^2 - (n+1)p -2 =0.
\eeq
Recent work has been done in examining this problem on $\mathbb{R} \times M$, where $M$ is hyperbolic space of dimension $n$. In \cite{AP}, Anker and Pierfelice obtain a wider range of dispersive and Strichartz estimates than in the Euclidean case, owing to the better dispersion on hyperbolic space. The resulting global existence results proved first in \cite{MT} for dimension $3$ and then improved and expanded in \cite{AP} are as follows: \\
\\
When $3 \leq n$ and $1 < b < 1 + \frac{4}{n-1}$, \eqref{wave} has a global solution given sufficiently small intial data $(f,g) \in H^{\gamma,2}(M) \oplus H^{\gamma-1,2} (M)$ for $\gamma = \frac{n+1}{4} \frac{b-1}{b+1}$. \\
\\
When $3 \leq n \leq 5$ and $1 + \frac{4}{n-1} \leq b \leq 1+ \frac{4}{n-2}$, \eqref{wave} has a global solution given sufficiently small initial data $(f,g) \in H^{\gamma,2}(M) \oplus H^{\gamma-1,2} (M)$  for $\gamma = \frac{n}{2}-\frac{2}{b-1}$. \\
\\
When $n \geq 6 $ and $1 +\frac{4}{n-1} \leq b \leq \frac{n-1}{2}+\frac{3}{n+1} - \sqrt{(\frac{n-3}{2}+\frac{3}{n+1})^2-4\frac{n-1}{n+2}}$, \eqref{wave} has a global solution given sufficiently small intial data $(f,g) \in H^{\gamma,2}(M) \oplus H^{\gamma-1,2} (M)$ for $\gamma = \frac{n}{2}-\frac{2}{b-1}$. \\
\\
When $n=3$ and $b \geq  5$, \eqref{wave} has a global solution given sufficiently small initial data $(f,g) \in H^{\gamma,2}(M) \oplus H^{\gamma-1,2} (M)$ for $\gamma = \frac{3}{2} -\frac{2}{b-1}$.\\
\\
In this paper we will add to this picture results for large $b$ and large $n$, obtained by using the approach of Lindblad and Sogge in \cite{LS} adapted to this setting. This requires using the Leibniz rule for fractional derivatives, which leads to the additional restriction that $b \in \mathbb{Z}$. We finish by demonstrating the existence and asymptotic completeness of wave operators, allowing us to conclude that the solution obtained scatters to a linear solution over time.

\section{Strichartz Estimates}
We will need to make use of the Strichartz estimates already known in this setting. In all that follows let $M= H^n$ unless otherwise specified.  Set
\beq Tf(t,x) = e^{it\sqrt{-\Delta}} f(x)
\eeq
\beq T^{*}g(x) = \int_{-\infty}^{\infty} e^{-it\sqrt{-\Delta}} g(t,x) dt
\eeq
and a relevant theorem, proved in \cite{MT} and \cite{AP}, is:
\begin{theorem} \label{homogstrichartz} We have the mapping properties $T: H^{\gamma,2}(M) \longrightarrow L^q( \RR, L^p(M))$ and  $T^*:L^{p'}(\RR, L^{q'}(M)) \longrightarrow H^{-\gamma,2}(M)$, whenever $ (p,q, \gamma) \in \mathcal{R} \cup \mathcal{E}$ , where 
\beq \mathcal{R} = \{ (p,q,\gamma) | 2< q < \frac{2(n-1)}{n-3}, 2 \leq p \leq \frac{4q}{(n-1)(q-2)}, \gamma =\frac{1}{2} (n+1)(\frac{1}{2}-\frac{1}{q}) \} \nonumber
\eeq
and
\beq \mathcal{E} = \{ (p,q, \gamma) | \frac{1}{p} \leq \frac{1}{2} (n-1)(\frac{1}{2} - \frac{1}{q}), \gamma = n(\frac{1}{2}-\frac{1}{q}) -\frac{1}{p} \}. \nonumber
\eeq
\end{theorem}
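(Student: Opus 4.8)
The plan is to derive both mapping properties from a single pointwise dispersive estimate for the half-wave propagator, fed into the abstract $TT^*$ machinery, and then to split the admissible set into a short-time (Euclidean-type) region and a long-time (genuinely hyperbolic) region. First I would note that the two claims are dual to one another: the boundedness of $T: H^{\gamma,2}(M) \to L^q(\RR, L^p(M))$ is equivalent, after transposition, to the stated bound for $T^*$ on the dual spaces, so it suffices to prove the estimate for $T$. Composing $T$ with its adjoint then reduces matters to the boundedness of
\beq TT^* g(t,x) = \int_{-\infty}^{\infty} e^{i(t-s)\sqrt{-\Delta}} g(s,x)\, ds
\eeq
from $L^{q'}(\RR, L^{p'}(M))$ into $L^q(\RR, L^p(M))$, which is precisely the form treated by the Keel--Tao method.

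The essential analytic input is the dispersive estimate for $e^{it\sqrt{-\Delta}}$ on $H^n$, which I would take from the kernel analysis of \cite{AP}. Writing the propagator as convolution against a radial kernel and expressing it through the spherical (Fourier--Helgason) transform with its explicit Plancherel density, one estimates the resulting oscillatory integral by stationary phase. The decisive feature, absent on $\RR^n$, is that the exponential volume growth and the spectral gap of $-\Delta$ on $H^n$ force the kernel to obey the Euclidean rate $|t|^{-(n-1)/2}$ only for small $|t|$, while decaying strictly faster for large $|t|$. This yields an $L^{p'}(M) \to L^p(M)$ bound on $e^{it\sqrt{-\Delta}}$ whose time weight is integrable over a far wider range of exponents than in the Euclidean setting.

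With this bound in hand I would treat the two regions separately. On $\mathcal{E}$ only the short-time decay is needed: the admissibility and scaling relations defining $\mathcal{E}$ are exactly those produced by the Keel--Tao theorem applied to the short-time dispersive exponent $\sigma = (n-1)/2$, after undoing a Littlewood--Paley frequency localization. On $\mathcal{R}$ the gain comes entirely from the large-time decay: because the time kernel is now globally integrable against the relevant Young / Hardy--Littlewood--Sobolev kernel, one may admit triples that violate Euclidean scaling, and the value $\gamma = \frac12(n+1)(\frac12-\frac1q)$, which depends on $q$ alone and leaves $p$ free up to the stated bound, arises from balancing the spatial $L^{p'}\to L^p$ loss against the fixed time-decay rate. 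I would obtain the full range by analytic (Stein--Weiss) interpolation between the $L^2$ energy estimate and the dispersive endpoint.

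The main obstacle is the dispersive estimate itself: producing the sharp, dimension-uniform pointwise bounds on the wave kernel requires a careful stationary-phase treatment of the oscillatory integral defined by the Plancherel measure, with the spatial variable separately compared to $|t|$ in the regimes where it is much smaller than, comparable to, or much larger than $|t|$, together with control of the transition between the short- and long-time behavior. A secondary difficulty is the Keel--Tao endpoint on the boundary $q = \frac{2(n-1)}{n-3}$ of $\mathcal{R}$, where the bilinear interpolation is delicate and must be carried out in Lorentz spaces rather than on the Lebesgue scale.
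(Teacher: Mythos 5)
The paper offers no proof of this theorem at all: it is quoted verbatim as a known result, ``proved in \cite{MT} and \cite{AP},'' and the author simply cites Metcalfe--Taylor and Anker--Pierfelice. So there is no in-paper argument to compare against. That said, your outline is essentially a faithful summary of how those references actually prove it: a pointwise dispersive estimate for $e^{it\sqrt{-\Delta}}$ on $H^n$ obtained from the spherical Fourier transform and its Plancherel density, with the Euclidean rate $|t|^{-(n-1)/2}$ for small times and strictly better decay for large times (this is where the exponential volume growth and the spectral gap enter); the Euclidean-type family $\mathcal{E}$ then comes from the Keel--Tao $TT^*$ scheme with $\sigma=(n-1)/2$, while the extra family $\mathcal{R}$ --- which violates Euclidean admissibility, since its condition $p\le \frac{4q}{(n-1)(q-2)}$ is the reverse of the inequality defining $\mathcal{E}$ --- is won from the integrable large-time decay. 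Your identification of the two genuinely hard points (the sharp kernel bounds in the several regimes of $|x|$ versus $|t|$, and the endpoint/Lorentz-space interpolation) is also accurate, though of course your write-up defers exactly those points, so it is a program rather than a proof; given that the paper itself defers the entire theorem to the literature, that is a reasonable level of detail.

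One small caution: the two mapping properties as printed are not literally adjoint to one another. If $T:H^{\gamma,2}\to L^q(\RR,L^p(M))$, transposition gives $T^*:L^{q'}(\RR,L^{p'}(M))\to H^{-\gamma,2}(M)$, not $T^*$ on $L^{p'}(\RR,L^{q'}(M))$ as stated. The way the theorem is invoked later in the paper (e.g.\ $T^*:L^{\tilde p'}(\RR,H^{\tilde\gamma,\tilde q'}(M))\to H^{\sigma-\tilde\gamma,2}(M)$, and the fact that $\gamma$ in $\mathcal{R}$ depends only on $q$) makes clear that the intended convention is $p$ for the time exponent and $q$ for the spatial one, so the displayed target of $T$ should read $L^p(\RR,L^q(M))$. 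Your duality reduction is correct in substance, but you should set it up with the exponents in the right slots; as written you have taken $q$ to be the time exponent, which would feed the wrong admissibility conditions into the Keel--Tao machinery.
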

Setting 
\beq \label{V} Vf(t,x) = \int_0^t \frac{\sin(t-s)\sqrt{-\Delta}}{\sqrt{-\Delta}} f(s,x) ds,
\eeq 
so that $Vf=u$ solves the zero-data inhomogeneous equation
\beq \label{inhomoggen} \Box u = f, u(0)=\partial_tu(0)=0
\eeq 
on $\RR \times M$, we also have
\begin{theorem} For $(p,q,\gamma), (\tilde{p}, \tilde{q}, \tilde{\gamma}) \in \mathcal{R} \cup \mathcal{E} $, $(p,\tilde{p}) \neq (2,2)$, we have
\beq V: L^{\tilde{p}'}(\RR, H^{\tilde{\gamma}, \tilde{q}'}(M) ) \longrightarrow L^p(\RR, H^{1-\gamma, q}(M) ).
\eeq 
\end{theorem}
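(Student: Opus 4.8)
The plan is to derive the inhomogeneous estimate from the homogeneous mapping properties of Theorem~\ref{homogstrichartz} in two stages. First I would establish the bound for the \emph{untruncated} operator, in which the integral in \eqref{V} runs over all of $\RR$ rather than from $0$ to $t$, by factoring it through the half-wave groups $T$ and $T^{*}$. Then I would recover the retarded operator $V$ itself by invoking the Christ--Kiselev lemma, and it is here that the hypothesis $(p,\tilde p)\neq(2,2)$ will be used.

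For the first stage, write the sine propagator via $\sin\theta=(e^{i\theta}-e^{-i\theta})/2i$ as
\beq \frac{\sin((t-s)\sqrt{-\Delta})}{\sqrt{-\Delta}} = \frac{1}{2i}(-\Delta)^{-1/2}\bigl(e^{i(t-s)\sqrt{-\Delta}}-e^{-i(t-s)\sqrt{-\Delta}}\bigr), \eeq
which is legitimate since the spectrum of $-\Delta$ on $H^n$ is bounded away from $0$, so $(-\Delta)^{-1/2}$ is a bounded multiplier. Denoting by $\widetilde V$ the untruncated operator, the contribution of the $e^{i(t-s)\sqrt{-\Delta}}$ term factors, up to a constant, as $T\,(-\Delta)^{-1/2}\,T^{*}$: the $s$-integral against $e^{-is\sqrt{-\Delta}}$ is exactly $T^{*}$, the multiplier $(-\Delta)^{-1/2}$ acts on $M$, and the application of $e^{it\sqrt{-\Delta}}$ is $T$. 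The $e^{-i(t-s)\sqrt{-\Delta}}$ term is handled identically by the time-reversed group, which satisfies the same estimates.

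The remaining work in this stage is derivative bookkeeping, which I expect to be routine. Because $e^{\pm it\sqrt{-\Delta}}$ commutes with every power of $-\Delta$, fractional powers may be moved freely across $T$ and $T^{*}$; one checks that the intrinsic $(-\Delta)^{-1/2}$, together with the powers converting the $L^{\tilde q'}$ input into $H^{\tilde\gamma,\tilde q'}$ and the $L^{q}$ output into $H^{1-\gamma,q}$, combine so that $T^{*}$ is applied as the map $L^{\tilde p'}(\RR,L^{\tilde q'})\to H^{-\tilde\gamma,2}$ and $T$ as the map $H^{\gamma,2}\to L^{p}(\RR,L^{q})$, both furnished by Theorem~\ref{homogstrichartz}. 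Composing the two gives boundedness of $\widetilde V:L^{\tilde p'}(\RR,H^{\tilde\gamma,\tilde q'})\to L^{p}(\RR,H^{1-\gamma,q})$.

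The second stage, passing from $\widetilde V$ to the retarded operator $V$, is where I expect the only real subtlety. The Christ--Kiselev lemma upgrades the full-line bound to a bound for the forward-in-time truncation $\int_{-\infty}^{t}$, provided the time-integrability exponent of the target strictly exceeds that of the source, that is $\tilde p'<p$; the operator $V$ of \eqref{V} then differs from this truncation only by the term $\int_{-\infty}^{0}$, which is a homogeneous solution controlled directly by Theorem~\ref{homogstrichartz}. Since Strichartz admissibility forces both time exponents to satisfy $p\geq 2$ and $\tilde p\geq 2$, we have $\tilde p'\leq 2\leq p$, so the required strict inequality $\tilde p'<p$ holds automatically unless $\tilde p'=2=p$, i.e.\ unless $(p,\tilde p)=(2,2)$. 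The excluded double endpoint is exactly the case where $\tilde p'=p$, Christ--Kiselev breaks down, and one would instead need the bilinear argument of Keel--Tao. Thus the hypothesis $(p,\tilde p)\neq(2,2)$ is precisely what makes the lemma available, and its application completes the proof.
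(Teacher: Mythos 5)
The paper offers no proof of this theorem at all: it is imported from \cite{MT} and \cite{AP} alongside Theorem 2.1, so there is no argument of the paper's own to compare against. Your proposal is essentially the standard derivation used in those references, and it is sound: split the sine propagator into half-wave groups (using that $\Spec(-\Delta)$ on $H^n$ is bounded away from $0$, so $(-\Delta)^{-1/2}$ is harmless --- a genuinely hyperbolic-space point worth making explicit), bound the untruncated operator as a composition $T\circ(-\Delta)^{-1/2}\circ T^{*}$ of the two mappings in Theorem 2.1, and recover the retarded operator via Christ--Kiselev. The derivative bookkeeping checks out: $T^{*}$ sends $L^{\tilde p'}(\RR,H^{\tilde\gamma,\tilde q'})$ into $L^2$, the factor $(-\Delta)^{-1/2}$ lifts to $H^{1,2}$, and $T:H^{\gamma,2}\to L^p(\RR,L^q)$ then lands you in $L^p(\RR,H^{1-\gamma,q})$, which is exactly where the target space's exponent $1-\gamma$ comes from. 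Your accounting of the role of $(p,\tilde p)\neq(2,2)$ is also the right one. Two small caveats: first, the set $\mathcal{E}$ as written in the paper does not explicitly impose $p\geq 2$, so your claim that admissibility forces $\tilde p'\leq 2\leq p$ relies on the (standard, and surely intended) convention that $p,\tilde p\geq 2$; you should say you are assuming it. Second, when you peel off the $\int_{-\infty}^{0}$ piece as a homogeneous solution, note that its Cauchy data are themselves controlled by the $T^{*}$ estimate applied to $\chi_{(-\infty,0]}f$, so that this term really does fall under Theorem 2.1; as stated this step is asserted rather than checked. Neither point is a gap in the idea, only in the write-up.
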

From Theorem 2.2, together with the commutativity of $V$ with $(-\Delta)^{-\frac{\sigma}{2}}$, we also deduce:
\begin{corollary} \label{bigcorollary} In the setting of Theorem 2.2, we have for each $\sigma \in \RR$
\beq ||Vf||_{L^p(\RR, H^{\sigma+1-\gamma,q}(M))} \leq C ||f||_{L^{\tilde{p}'}(\RR, H^{\sigma+\tilde{\gamma}, \tilde{q}'}(M))}
\eeq
\end{corollary}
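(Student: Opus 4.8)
The plan is to deduce Corollary 2.3 from Theorem 2.2 by commuting the fractional power $(-\Delta)^{-\sigma/2}$ past the solution operator $V$. The key observation is that $V$, being defined through functional calculus in $\sqrt{-\Delta}$ via \eqref{V}, commutes with any function of $-\Delta$, in particular with $(-\Delta)^{-\sigma/2}$. First I would fix $\sigma \in \RR$ and an input $f$, and set $g = (-\Delta)^{\sigma/2} f$, so that $f = (-\Delta)^{-\sigma/2} g$. The goal is then to estimate $\|Vf\|_{L^p(\RR, H^{\sigma+1-\gamma,q}(M))}$ in terms of $\|g\|_{L^{\tilde p'}(\RR, H^{\tilde\gamma,\tilde q'}(M))}$, since $\|g\|_{L^{\tilde p'}(\RR, H^{\tilde\gamma,\tilde q'})} = \|f\|_{L^{\tilde p'}(\RR, H^{\sigma+\tilde\gamma,\tilde q'})}$ by the definition of the $L^2$-based-in-$x$ (or more precisely $L^{\tilde q'}$-based) inhomogeneous Sobolev norms as $H^{s,r} = (-\Delta)^{-s/2} L^r$.

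Next I would carry out the commutation explicitly. Since $V$ commutes with $(-\Delta)^{-\sigma/2}$, we have $(-\Delta)^{\sigma/2} V f = V\big((-\Delta)^{\sigma/2} f\big) = Vg$. Therefore
\beq
\|Vf\|_{L^p(\RR, H^{\sigma+1-\gamma,q})} = \|(-\Delta)^{(\sigma+1-\gamma)/2} V f\|_{L^p(\RR, L^q)} = \|(-\Delta)^{(1-\gamma)/2} Vg\|_{L^p(\RR, L^q)} = \|Vg\|_{L^p(\RR, H^{1-\gamma,q})}.
\eeq
Now Theorem 2.2 applies directly to $g$, giving $\|Vg\|_{L^p(\RR, H^{1-\gamma,q})} \leq C \|g\|_{L^{\tilde p'}(\RR, H^{\tilde\gamma,\tilde q'})}$. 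Finally, translating back via $\|g\|_{L^{\tilde p'}(\RR, H^{\tilde\gamma,\tilde q'})} = \|f\|_{L^{\tilde p'}(\RR, H^{\sigma+\tilde\gamma,\tilde q'})}$ yields exactly the claimed inequality, since the $t$-integration is unaffected by the spatial operator $(-\Delta)^{\sigma/2}$ and the norms in $t$ match.

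The main technical point to justify is the commutativity of $V$ with $(-\Delta)^{-\sigma/2}$, together with the identification of the inhomogeneous Sobolev spaces $H^{s,r}(M)$ as images of $L^r(M)$ under negative powers of $(-\Delta)$. The commutativity is clean because $V$ is built from the spectral functions $s \mapsto \sin\big((t-s)\sqrt{-\Delta}\big)/\sqrt{-\Delta}$, which are Borel functions of $-\Delta$ and hence commute with $(-\Delta)^{-\sigma/2}$ on the appropriate spectral-theoretic domain; on hyperbolic space the spectrum of $-\Delta$ is bounded below by $(n-1)^2/4 > 0$, so $(-\Delta)^{-\sigma/2}$ is well-defined and bounded-below issues do not arise. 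The one step requiring mild care is that the identity $(-\Delta)^{\sigma/2} Vf = Vg$ should first be established on a dense class (say Schwartz or smooth compactly supported data in the $t$ and $x$ variables) where all operations are manifestly justified, and then extended to the full space by the density and the boundedness furnished by Theorem 2.2. I expect this density-and-extension bookkeeping to be the only real obstacle, and it is routine given the functional-calculus framework.
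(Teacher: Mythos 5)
Your proposal is correct and is exactly the argument the paper intends: the corollary is deduced from Theorem 2.2 by commuting $(-\Delta)^{-\sigma/2}$ past $V$ (which is legitimate since $V$ is built from the functional calculus of $\sqrt{-\Delta}$) and re-identifying the Sobolev norms, which is all the paper says in its one-line derivation. Your additional remarks on the spectral gap of $-\Delta$ on hyperbolic space and the density-and-extension step are sensible elaborations of the same route, not a different one.
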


\section{Existence of Solutions}
Here we will use the theorems of the previous section to prove the following:
\begin{theorem} Assume $M=H^n$, $n \geq 3$, and take $b \in [1+ \frac{4}{n-1}, \infty) \cap \mathbb{Z}$. Then there exists $\epsilon_0 > 0$ such that, if the initial data $(f,g)$ satisfy
\beq ||f||_{H^{\gamma,2}(M)}, ||g||_{H^{\gamma-1,2}(M)} < \epsilon_0,
\eeq 
for 
\beq \label{gammab} \gamma = \frac{n}{2}-\frac{2}{b-1}
\eeq
the equation \eqref{wave} is globally solvable.
\end{theorem}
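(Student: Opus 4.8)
The plan is to recast \eqref{wave} as a fixed-point problem for Duhamel's formula and to solve it by contraction in a scaling-adapted Strichartz space. Writing the free evolution as
\beq u_{\mathrm{free}}(t) = \cos(t\sqrt{-\Delta})f + \frac{\sin(t\sqrt{-\Delta})}{\sqrt{-\Delta}} g, \nonumber \eeq
a solution of \eqref{wave} is a fixed point of $\Phi(u) = u_{\mathrm{free}} + a\, V(|u|^b)$, with $V$ as in \eqref{V}. I would fix a triple $(p,q,\gamma) \in \mathcal{R} \cup \mathcal{E}$ with $\gamma$ equal to the value \eqref{gammab}, work in the space $X = L^p(\RR, H^{\sigma+1-\gamma,q}(M))$ for a suitable $\sigma$ as in Corollary \ref{bigcorollary}, and show that $\Phi$ maps a small ball of $X$ into itself and contracts there. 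Since $\gamma = \frac n2 - \frac{2}{b-1}$ is exactly the critical Sobolev exponent for the scaling $u \mapsto \lambda^{2/(b-1)} u(\lambda\, \cdot)$ of $\Box u = |u|^b$, this is a critical-regularity argument, and the exponents must be matched with care.

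The two linear inputs are immediate from the previous section: by Theorem \ref{homogstrichartz}, together with the commutation of the propagators with $(-\Delta)^{\sigma/2}$, the free term satisfies $\|u_{\mathrm{free}}\|_X \lesssim \|f\|_{H^{\gamma,2}} + \|g\|_{H^{\gamma-1,2}}$, and by Corollary \ref{bigcorollary} the retarded operator satisfies $\|V h\|_X \lesssim \|h\|_Y$ for the dual-type space $Y = L^{\tilde p'}(\RR, H^{\sigma+\tilde\gamma,\tilde q'}(M))$ with $(\tilde p,\tilde q,\tilde\gamma) \in \mathcal{R}\cup\mathcal{E}$ and $(p,\tilde p) \ne (2,2)$. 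Everything therefore reduces to the nonlinear estimate
\beq \||u|^b\|_{Y} \lesssim \|u\|_X^b, \nonumber \eeq
together with its difference version
\beq \||u|^b - |v|^b\|_Y \lesssim \bigl(\|u\|_X^{b-1} + \|v\|_X^{b-1}\bigr)\, \|u-v\|_X, \nonumber \eeq
which for $b \ge 2$ follows from the pointwise bound $\bigl||u|^b - |v|^b\bigr| \lesssim (|u|^{b-1}+|v|^{b-1})|u-v|$.

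To prove the nonlinear estimate I would, at each fixed time, apply a fractional Leibniz (Kato--Ponce) rule on $H^n$ to the spatial $H^{\sigma+\tilde\gamma,\tilde q'}$ norm of $|u|^b$, distributing the derivatives onto a single factor and placing the remaining $b-1$ copies of $u$ in a Lebesgue norm controlled, via Sobolev embedding on $H^n$, by the $H^{\sigma+1-\gamma,q}$ norm; a H\"older inequality in $x$ then balances the conjugate exponent $\tilde q'$ against $q$, and a H\"older inequality in $t$ balances $\tilde p'$ against $p$. It is precisely the requirement that these H\"older and Sobolev relations close simultaneously in $x$ and $t$ that forces $\gamma$ to take the value \eqref{gammab} and pins down the admissible triples. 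The integrality of $b$ enters here: the Leibniz and chain rules for $|u|^b$ require $u \mapsto |u|^b$ to carry $\tilde\gamma$ (generally fractional) derivatives, which is guaranteed when $b$ is an integer but would fail for fractional $b$ exceeding the available regularity.

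The main obstacle is this nonlinear estimate --- specifically, verifying that a single triple (or compatible pair of triples) in $\mathcal{R} \cup \mathcal{E}$ simultaneously satisfies the homogeneous bound, the retarded bound while avoiding the excluded endpoint $(p,\tilde p)=(2,2)$, and the H\"older/Sobolev bookkeeping of the Leibniz argument for every integer $b \ge 1 + \frac{4}{n-1}$ and every $n \ge 3$. Once the estimate is established, I would choose $\epsilon_0$ so small that $\Phi$ is a contraction on the ball $\{\|u\|_X \le 2C\epsilon_0\}$; the Banach fixed-point theorem then yields a unique global solution $u \in X$, completing the proof.
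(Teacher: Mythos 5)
Your overall architecture --- Duhamel's formula, the homogeneous Strichartz bound for the free term, Corollary \ref{bigcorollary} for the retarded term, and a fractional Leibniz rule on $H^n$ to close the nonlinear estimate $\||u|^b\|_Y \lesssim \|u\|_X^b$ --- matches the paper's strategy, and you correctly identify both the role of the Kato--Ponce rule on a manifold of bounded geometry and the reason $b$ must be an integer. But there is a genuine gap in your contraction step. You assert that the difference estimate
$\||u|^b - |v|^b\|_Y \lesssim (\|u\|_X^{b-1}+\|v\|_X^{b-1})\|u-v\|_X$
``follows from the pointwise bound'' $||u|^b-|v|^b| \lesssim (|u|^{b-1}+|v|^{b-1})|u-v|$. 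It does not: your space $Y = L^{\tilde p'}(\RR, H^{\sigma+\tilde\gamma,\tilde q'}(M))$ carries fractional spatial derivatives, and a pointwise inequality controls only Lebesgue norms. Estimating $\tilde\gamma$ derivatives of $|u|^b-|v|^b$ requires a fractional chain rule for differences, which in general yields only H\"older-type (not Lipschitz) dependence on $u-v$ at this regularity, so the map $\Phi$ is not known to contract in the full norm of $X$. This is precisely why the paper (following Lindblad--Sogge) separates the two roles: it proves \emph{boundedness} of the iterates in the derivative-carrying norms (via the quantity $N_i$ in \eqref{sup1} and the Leibniz expansion \eqref{afterleibniz}), but runs the \emph{contraction} only in the derivative-free metric $d(u,v) = \|u-v\|_{L^{2(n+1)/(n-1)}(\RR\times M)}$ of \eqref{distance}, where the pointwise bound plus H\"older genuinely suffices; completeness of the bounded set $\mathfrak{X}$ under this weaker metric then closes the argument.

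A second, smaller issue: after the Leibniz rule distributes the $\frac{n-1}{2}-\frac{2}{b-1}$ derivatives among the $b$ factors, the resulting terms place each factor in a different space $L^{p_j}(\RR, H^{\alpha_j,p_j}(M))$ with $\alpha_j$ ranging over $[0, \frac{n-1}{2}-\frac{2}{b-1}]$; a single norm $\|u\|_X$ does not directly control all of these. The paper handles this by taking a supremum over a one-parameter family of Strichartz norms indexed by $q \in [\frac{2(n+1)}{n-1}, \frac{(b-1)(n+1)}{2}]$, each admissible via the class $\mathcal{E}$, with the exponents locked together by \eqref{wantalpha}. You gesture at recovering these intermediate norms by Sobolev embedding from a single norm; that bookkeeping may be salvageable, but as written it is the part of your ``main obstacle'' that the paper resolves by enlarging the iteration space to the intersection in \eqref{X} rather than by embedding.
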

\begin{proof}
Using the technique of Lindblad and Sogge in \cite{LS}, the method of proof will be Picard iteration on the space
\begin{eqnarray} \label{X} \mathfrak{X}=\{ u \in L^{\frac{2(n+1)}{n-1}} (\RR, H^{\gamma-\frac{1}{2}, \frac{(2(n+1)}{n-1}}(M) )\cap L^q(\RR \times M) : \\
||u||_{L^{\frac{2(n+1)}{n-1}}(\RR, H^{\gamma-\frac{1}{2}, \frac{2(n+1)}{n-1}}(M))}, ||u||_{L^q(\RR \times M)} \leq \delta \}  \nonumber
\end{eqnarray}
with $\gamma$ as in \eqref{gammab} and 
\beq \label{bandq} q = \frac{(n+1)(b-1)}{2}.
\eeq
Note that 
\beq \label{gammain} b \geq 1 + \frac{4}{n-1} \Rightarrow q \geq \frac{2(n+1)}{n-1}, \eeq
 so that in this setting it is possible to have $(q,q, \gamma) \in \mathcal{E}$. Also 
\beq \label{gammaq} \gamma = \frac{n}{2}-\frac{n+1}{q}. 
\eeq
The distance function we put on $\mathfrak{X}$ is:
\beq \label{distance} d(u,v) = || u-v||_{L^{\frac{2(n+1)}{n-1}}(\RR \times M)}.
\eeq
It is an important observation that $\mathfrak{X}$ is complete with respect to this distance. Now, following the standard iteration scheme, we define a sequence $\{ u_i \}$ by setting $u_i$ to solve:
\beq \pm u_{i-1}^b= \Box u_i
\eeq
with
\beq u_i(0,x)=f, \partial_t u_i(0,x)=g
\eeq
and
\beq u_{-1} \equiv 0.
\eeq
Our task is now to demonstrate that the non-linear mapping
\beq \label{picardmap} u_i \rightarrow u_{i+1}
\eeq
is: \\
(i) well-defined \\
(ii) a contraction on $\mathfrak{X}$ under the norm \eqref{distance}. \\
Further, we will need to demonstrate that for $u = \lim_{i\rightarrow \infty} u_i$, we have \\
(iii) $F(u_i) \rightarrow F(u)$ in $D' (\RR \times M)$. \\
To begin, define
\beq \label{sup1} N_i = \sup_{\frac{2(n+1)}{n-1} \leq q \leq \frac{(b-1)(n+1)}{2}} || u_i ||_{L^q(\RR, H^{\frac{n+1}{q}-\frac{2}{b-1}, q}(M))}
\eeq
We pause to note some facts about $N_i$.  First, $N_0$ is finite: Set $u_0(t) = \Xi_0 (f,g)(t)= \cos t \sqrt{-\Delta} f + \frac{\sin t \sqrt{-\Delta}}{\sqrt{-\Delta}}g$, and observe that Thorem \ref{homogstrichartz} and the commutativity of $\Xi_0$ with $(\lambda I-\Delta)^{\frac{\alpha}{2}}$ together imply:
\beq ||u_0||_{L^q(\RR, H^{\frac{n+1}{q}-\frac{2}{b-1}, q}(M))} \lesssim  ||f||_{H^{\gamma,2}(M)} + ||g||_{H^{\gamma-1, 2}(M)}
\eeq
provided $(q,q, \frac{n}{2}-\frac{n+1}{q}) \in \mathcal{E}$ and $\gamma$ is as in \eqref{gammab}.  Thus $N_0$ is finite in this setting, and bounded above by the (small) norm of the initial data. Second, it is also true that, for initial data sufficiently small, we have
\beq \label{Mm} N_m \leq 2 N_0.
\eeq
One proves this by induction on $m$, writing:
\beq u_{i+1} = u_0 + \int_0^t \frac{\sin (t-s) \sqrt{-\Delta}}{\sqrt{-\Delta}} F(u_i)(s) ds,
\eeq
This gives
\beq  \label{m+1} N_{m+1} \leq N_0 + || V(F(u_i)||_{L^{q}(\RR, H^{\frac{n+1}{q}-\frac{2}{b-1}, q}(M))}.
\eeq
 We then use Corollary 2.1 with $\sigma = \frac{n}{2}-\frac{2}{b-1}+1$ to deduce that, as $(q,q, \frac{n}{2}-\frac{n+1}{q})$ and $(\frac{2(n+1)}{n-1}, \frac{2(n+1)}{n-1}, \frac{1}{2})$ are in $\mathcal{E}$,

			\beq  ||V(F(u_i)||_{L^q(\RR, H^{\frac{n+1}{q}-\frac{2}{b-1},q}(M))} \lesssim || u_m^b||_{L^{\frac{2(n+1)}{n+3}}(\RR, H^{\frac{n-1}{2}-\frac{2}{b-1}, \frac{2(n+1)}{n+3}}(M))}
\eeq
and hence
\beq  \label{m+1} N_{m+1} \leq N_0 + || u_m^b||_{L^{\frac{2(n+1)}{n+3}}(\RR, H^{\frac{n-1}{2}-\frac{2}{b-1}, \frac{2(n+1)}{n+3}}(M))}.
\eeq
At this point we will need:
\begin{lemma} For $\sigma \in (0,1)$ and M a Riemannian manifold with $C^{\infty}$ bounded geometry, 
\beq ||uv||_{H^{\sigma,p}(M)} \leq C||u||_{H^{\sigma, s_1}(M)} ||v||_{L^{s_2}(M)} + C||u||_{L^{t_1}(M)} ||v||_{H^{\sigma, t_2}(M))}
\eeq
where $\frac{1}{s_1}+\frac{1}{s_2} = \frac{1}{t_1} +\frac{1}{t_2} = \frac{1}{p}$.
\end{lemma}
We will prove Lemma 3.1 presently, but before that let us see how it implies \eqref{Mm}.  If we apply Lemma 3.1 and the standard Leibniz rule to the last term of \eqref{m+1}, we see that it is bounded by a finite sum of terms that look like:
\beq \label{afterleibniz} \Pi_{j=1}^b ||u_m||_{L^{p_j}(\RR, H^{\alpha_j, p_j}(M))}
\eeq
where
\beq \label{alpha1} 0 \leq \alpha_j \leq \frac{n-1}{2} - \frac{2}{b-1}
\eeq
and
\beq \label{alpha2} \Sigma_{j=1}^b \alpha_j = \frac{n-1}{2} - \frac{2}{b-1}
\eeq
and
\beq \label{ps} \frac{2(n+1)}{n-1} \leq p_j \leq \infty
\eeq
and
\beq \label{adds} \Sigma_{j=1}^b \frac{1}{p_j} = \frac{n+3}{2(n+1)}.
\eeq
Fixing the $\alpha_j$'s to meet the above conditions and considering the definition of $N_m$, we take $p_j$ in \eqref{afterleibniz} to satisfy:
\begin{equation} \label{wantalpha} \frac{n+1}{p_j} - \frac{2}{b-1} = \alpha_j. \end{equation}  Summing over these quantities yields \begin{equation}
\Sigma_{j=1}^{b} \frac{1}{p_j} = \frac{n+3}{2(n+1)},
\end{equation}
and \eqref{alpha2} gives
\beq \frac{2(n+1)}{n-1} \leq p_j \leq \frac{(b-1)(n+1)}{2}.
\eeq
Then for each term in \eqref{afterleibniz}  we have
\begin{equation} ||u_m||_{L^{p_j}(\mathbb{R}, H^{\alpha_j, p_j}(M))} \leq N_m,
\end{equation}
and hence that \eqref{afterleibniz} is bounded above by $N_m^b$. 
Plugging this into \eqref{m+1} gives 
\beq N_{m+1} \leq N_0 + N_m^b,
\eeq
which by induction yields \eqref{Mm} for $N_0$ sufficiently small. Then since $\frac{(n+1)(b-1)}{2}$ and $\frac{2(n+1)}{n-1}$ are in $[\frac{2(n+1)}{n-1}, \frac{(b-1)(n+1)}{2}]$, we see that $||u_m||_{L^q(\RR \times M)}$ and $||u_m||_{L^{\frac{2(n+1)}{n-1}}(\RR, H^{\gamma -\frac{1}{2}, \frac{2(n+1)}{n-1}}(M))} $ are also bounded above by $2N_0$. Hence \eqref{picardmap} is well-defined on $\mathfrak{X}$ for initial data sufficiently small.\\
\\
We must now demonstrate that \eqref{picardmap} is a contraction under the norm \eqref{distance}. Write
\begin{eqnarray} & & \label{contract} ||u_{m+1}-u_{k+1}||_{L^{\frac{2(n+1)}{n-1}}(\RR \times M)} = \\
& &||V(F(u_m)-F(u_k))||_{L^{\frac{2(n+1)}{n-1}}(\RR \times M)} \lesssim \nonumber \\
& &||F(u_m)-F(u_k)||_{L^{\frac{2(n+1)}{n+3}}(\RR \times M)}, \nonumber
\end{eqnarray}
the last line of course coming from Theorem 2.2. Then using
\beq ||u|^b -|v|^b| \lesssim |u-v| (|u|^{b-1}-|v|^{b-1})
\eeq
and
\beq \frac{2}{n+1}+\frac{n-1}{2(n+1)} = \frac{n+3}{2(n+1)},
\eeq
Holder's inequality tells us that the last term in \eqref{contract} is bounded above by
\begin{eqnarray} & & ||u_m-u_k||_{L^{\frac{2(n+1)}{n-1}}(\RR \times M)} (|| |u_m|^{b-1} ||_{L^{\frac{n+1}{2}}(\RR \times M)} + || |u_k|^{b-1} ||_{L^{\frac{n+1}{2}}(\RR \times M)} ) = \nonumber \\
& & ||u_m-u_k||_{L^{\frac{2(n+1)}{n-1}}(\RR \times M)} (||u_m||_{L^q(\RR \times M)}^{b-1} + ||u_k||_{L^q(\RR \times M)}^{b-1}).
\end{eqnarray}
The second term is bounded above by $2 \delta^{b-1}$, giving us the contractivity property. \\
\\
Finally we need to show that $F(u_i) \rightarrow F(u) $ in $D'(\RR \times M)$, where $u$ is the limit of $ \{ u_i \} $ in $\mathfrak{X}$. This step is implicit in our previous arguments:
\begin{eqnarray} & & || F(u_i)- F(u) ||_{L^{\frac{2(n+1)}{n+3}}(\RR \times M)} \lesssim \\
& & ||u_i- u||_{L^{\frac{2(n+1)}{n-1}}(\RR \times M)} (|| |u_i|^{b-1}||_{L^{\frac{n+1}{2}}(\RR \times M)} + || |u|^{b-1}|| _{L^{\frac{n+1}{2}}(\RR \times M)}) \nonumber.
\end{eqnarray}
The second term here is finite given $u, u_i \in \mathfrak{X}$, while the first term goes to zero as $i \rightarrow \infty $.
\end{proof}
We return now to the proof of Lemma 3.1. This result is already established on Euclidean space; see for instance \cite{KP} and \cite{CW}. We will use the Euclidean version in conjunction with:
\begin{lemma}
For $M$ a Riemannian manifold with $C^{\infty}$ bounded geometry, with $s>0$ and $p \in (1, \infty)$,
\beq ||u||^p_{H^{s,p}(M)} \approx  \sum \limits_j ||\phi_j u||^p_{H^{s,p}(M)} + ||u||_{L^p}^p
\eeq
where $\{\phi_j: j \in \mathbb{N} \}$ is a tame partition of unity as defined in (1.27) of \cite{HB}.
\end{lemma}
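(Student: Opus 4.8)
The plan is to unfold the equivalence $\approx$ into its two inequalities and reduce both to a single summed commutator estimate. I take $H^{s,p}(M)$ to be the Bessel-potential space normed by $||u||_{H^{s,p}(M)}=||(1-\Delta)^{s/2}u||_{L^p(M)}$ (on a manifold of bounded geometry $-\Delta\ge 0$, so $1-\Delta$ is invertible; one may replace $1$ by any $\lambda>0$ to match the shift used earlier). From (1.27) of \cite{HB}, a tame partition of unity satisfies $\phi_j\ge 0$, $\sum_j\phi_j\equiv 1$, is subordinate to a uniformly locally finite cover, so that at most $N$ of the $\phi_j$ are nonzero at any point, and has all covariant derivatives bounded uniformly in $j$. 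The bounded-geometry hypothesis furnishes a uniform pseudodifferential calculus in which $(1-\Delta)^{s/2}$ has order $s$ and each commutator $C_j:=[(1-\Delta)^{s/2},\phi_j]$ has order $s-1$ with seminorms bounded uniformly in $j$, the kernel of $C_j$ being concentrated, up to rapidly decaying tails, near $\operatorname{supp}d\phi_j$.

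Next I would record the baseline at $s=0$. Since $0\le\phi_j\le 1$ and $p>1$ imply $\phi_j^p\le\phi_j$, we have $\sum_j\phi_j^p\le 1$, while the finite-overlap bound and convexity of $t\mapsto t^p$ give $\sum_j\phi_j^p\ge N^{1-p}$ pointwise; hence $\sum_j||\phi_j w||_{L^p}^p\approx||w||_{L^p}^p$ for all $w$, with constants depending only on $N$ and $p$. I then use the identity $(1-\Delta)^{s/2}(\phi_j u)=\phi_j(1-\Delta)^{s/2}u+C_j u$. For the upper bound, $||\phi_j u||_{H^{s,p}}^p\lesssim||\phi_j(1-\Delta)^{s/2}u||_{L^p}^p+||C_j u||_{L^p}^p$; summing and applying the baseline to the first term gives $\sum_j||\phi_j u||_{H^{s,p}}^p\lesssim||u||_{H^{s,p}}^p+\sum_j||C_j u||_{L^p}^p$. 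For the lower bound, the reverse baseline applied to $w=(1-\Delta)^{s/2}u$ yields $||u||_{H^{s,p}}^p\lesssim\sum_j||\phi_j(1-\Delta)^{s/2}u||_{L^p}^p$, and the same splitting gives $\sum_j||\phi_j(1-\Delta)^{s/2}u||_{L^p}^p\lesssim\sum_j||\phi_j u||_{H^{s,p}}^p+\sum_j||C_j u||_{L^p}^p$. Since $(1-\Delta)^{-s/2}$ is bounded on $L^p$ for $s\ge0$, the term $||u||_{L^p}^p$ is automatically dominated by $||u||_{H^{s,p}}^p$. Thus both inequalities follow once I control the single quantity $\sum_j||C_j u||_{L^p}^p$.

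The crux is therefore the summed commutator estimate
\[ \sum_j||C_j u||_{L^p(M)}^p\lesssim||u||_{H^{s-1,p}(M)}^p. \]
Granting it, the upper inequality is immediate since $||u||_{H^{s-1,p}}\lesssim||u||_{H^{s,p}}$. For the lower inequality in the range $s\in(0,1]$ relevant to Lemma 3.1 one has $s-1\le0$, so $||u||_{H^{s-1,p}}\lesssim||u||_{L^p}$, producing exactly the $||u||_{L^p}^p$ term on the right-hand side of the Lemma; for $s>1$ one uses the interpolation inequality $||u||_{H^{s-1,p}}\le\varepsilon||u||_{H^{s,p}}+C_\varepsilon||u||_{L^p}$ to absorb the first piece into the left-hand side, or induces on $\lceil s\rceil$.

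The main obstacle is precisely this estimate, where the nonlocality of the fractional operator $(1-\Delta)^{s/2}$ is felt: each $C_j$ is a global operator, so naively summing the uniform bounds $||C_j u||_{L^p}\lesssim||u||_{H^{s-1,p}}$ diverges over the infinitely many charts. I would overcome this by exploiting that $C_j$ has order $s-1$ with kernel localized near $\operatorname{supp}d\phi_j$: writing $C_j=\psi_j C_j+(1-\psi_j)C_j$ with $\psi_j$ a slightly enlarged cutoff of the same tame type, the pieces $\psi_j C_j u$ inherit the finite-overlap structure and sum via the baseline at regularity $s-1$, while the smoothing remainders $(1-\psi_j)C_j u$ have rapidly decaying off-diagonal kernels and are summed by a Schur-type almost-orthogonality argument using the uniform geometry. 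Equivalently, and probably closer to the route suggested by \cite{HB}, one transports each $\phi_j u$ to $\RR^n$ through the uniform chart containing $\operatorname{supp}\phi_j$, invokes the known Euclidean localization of Bessel-potential norms, and recombines using that the charts, transition maps, and induced norm equivalences are all uniform in $j$; this is the step where the bounded-geometry hypothesis does the real work.
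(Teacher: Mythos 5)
The paper itself does not actually prove this lemma: it cites Lemma 6.7 of \cite{MT} and merely recalls the definitions of $C^{\infty}$ bounded geometry and tame partitions of unity. So your commutator-based argument is necessarily a different route. The reduction you set up is clean and correct: the $s=0$ baseline $\sum_j\|\phi_j w\|_{L^p}^p\approx\|w\|_{L^p}^p$, the identity $(1-\Delta)^{s/2}(\phi_j u)=\phi_j(1-\Delta)^{s/2}u+C_ju$, and the observation that both inequalities (with the stated handling of the $\|u\|_{L^p}^p$ term and the interpolation/absorption for $s>1$) follow from the single summed commutator bound $\sum_j\|C_ju\|_{L^p}^p\lesssim\|u\|_{H^{s-1,p}}^p$.

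The gap is that this crux estimate is only asserted, and the sketch offered for it does not close. Two concrete problems. First, when you say the pieces $\psi_jC_ju$ ``sum via the baseline at regularity $s-1$,'' you are invoking $\sum_j\|\eta_j v\|_{H^{s-1,p}}^p\lesssim\|v\|_{H^{s-1,p}}^p$, which is precisely the upper-bound half of the lemma being proved, at a regularity (nonpositive, when $s\le 1$) where you have established nothing; your baseline covers only $L^p$. This is repairable for $s\in(0,1)$ by instead using that $\psi_jC_j\tilde\psi_j$ is uniformly of order $s-1<0$, hence uniformly bounded on $L^p$, and summing $\|\tilde\psi_ju\|_{L^p}^p$ by finite overlap --- but as written the step is circular in spirit. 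Second, the Schur-type summation of the remainders $(1-\tilde\psi_j)C_ju$ needs the off-diagonal kernels to be integrable over $M$ uniformly; on the manifold this paper actually uses, $M=H^n$, the volume growth is exponential, so ``rapidly decaying'' (polynomially decaying) tails from a uniform symbol calculus are not integrable, and one must work with properly supported parametrices or exponential off-diagonal decay. Your one-sentence alternative --- transporting each $\phi_ju$ to $\mathbb{R}^n$ through uniform charts, invoking the Euclidean localization of Bessel-potential norms, and recombining with constants uniform in $j$ --- is essentially the argument of \cite{MT} and sidesteps both difficulties, but you present it as an aside rather than carrying it out, so the proof as submitted rests on an unestablished estimate.
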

\begin{proof} The proof of this lemma may be found in Lemma 6.7 of \cite{MT}. The term $C^{\infty}$ \textit{bounded geometry} is defined in (1.19) - (1.23) of \cite{HB} as follows:
First, there exists $R_0 \in \mathbb{R}$ such that for all $p \in M$, the exponential map $\text{Exp}_p: T_p M \longrightarrow M$ maps $B_{R_0}(0)$ to $B_{R_0}(p)$ diffeomorphically. Second, the pull-back of the metric tensor from $B_{R_0}(p)$ to $B_{R_0}(0)$ yields a collection of $n \times n$ matrices $G_p(x)$ such that $ \{ G_p: p \in M \} $ is bounded in $C^{\infty}(B_{R_0}(0), \text{End} (\mathbb{R}^n ))$. Finally, for all $p \in M$, $x \in B_{R_0}(0)$, and $\xi \in \mathbb{R}^n$, we have that $ \xi \cdot G_p(x) \xi \geq \frac{1}{2} |\xi|^2$ and $B_{R_0}(p)$ is geodesically convex. Then, a \textit{tame partition of unity} is one whose supports have a bounded number of overlaps and whose elements $\phi_k$ have the property that $\phi_k \circ \text{Exp}$ is bounded in $C_0^{\infty}$ of a ball in $\mathbb{R}^n$.
\end{proof}
Also of use will be the fact that:
\beq || \phi_j u||_{H^{s,p}(M)} \approx || \phi_j u  \circ Exp ||_{H^{s,p}(\RR^n)}.
\eeq
We then write:
\begin{eqnarray} & &|| u v||_{H^{\sigma,p}(M)} = (||u v||_{H^{\sigma, p}(M)}^p)^{\frac{1}{p}} \\
&\approx& (\sum \limits_j  ||\phi_j (uv) ||^p_{H^{\sigma, p}(M)} + ||uv||_{L^p(M)}^p) ^{\frac{1}{p}} \nonumber \\
&\leq& C( \sum \limits_j  ||\phi_j uv \circ Exp ||_{H^{\sigma, p}(\RR^n)}^p)^{\frac{1}{p}} + ||uv||_{L^p(M)} \nonumber \\
&\leq& C(\sum \limits_j  ( ||\phi_j u||_{H^{\sigma,s_1}(M)} ||\phi_j v ||_{L^{s_2}(M)}+ ||\phi_j u ||_{L^{t_1}(M)} || \phi_j v ||_{H^{\sigma, t_2}(M)} )^p )^{\frac{1}{p}} +||uv||_{L^p(M)} \nonumber \\
&\leq& C( \sum \limits_j  (||\phi_j u||_{H^{\sigma,s_1}(M)} ||\phi_j v ||_{L^{s_2}(M)})^p)^{\frac{1}{p}} \nonumber \\
&+& C( \sum \limits_j  ( ||\phi_j u ||_{L^{t_1}(M)} || \phi_j v ||_{H^{\sigma, t_2}(M)} )^p )^{\frac{1}{p}} +||uv||_{L^p(M)}\nonumber \\
&\leq& C(\sum \limits_j   (||\phi_j u||_{H^{\sigma,s_1}(M)})^{s_1})^{\frac{1}{s_1}} (\sum \limits_j  ( ||\phi_j v ||_{L^{s_2}(M)})^{s_2})^{\frac{1}{s_2}}  \nonumber \\ 
&+& C(\sum \limits_j  ( ||\phi_j u ||_{L^{t_1}(M)})^{t_1})^{\frac{1}{t_1}} (\sum \limits_j  ( || \phi_j v ||_{H^{\sigma, t_2}(M)} )^{t_2} )^{\frac{1}{t_2}} + ||uv||_{L^p(M)} \nonumber \\
&\leq& C ||u||_{H^{\sigma, s_1}(M)} ||v||_{L^{ s_2}(M)} + C ||u||_{L^{t_1}(M)} ||v||_{H^{\sigma, t_2}(M)} +  ||uv||_{L^p(M)}. \nonumber
\end{eqnarray}
The last term will be dealt with via Holder's inequality, to write
\beq ||uv||_{L^p(M)} \leq ||u||_{L^{s_1}} ||v||_{L^{s_2}}.
\eeq
This concludes the proof of Lemma 3.1.
\section{Scattering}
 In this section we examine the asymptotic behavior of the solution $u$ to \eqref{wave} in the setting of Theorem 3.1. First, we define:
\beq w= \begin{pmatrix} u \\ u_t \end{pmatrix},\text{     }h= \begin{pmatrix} f \\ g \end{pmatrix}, \text{     } G(w) = \begin{pmatrix}0 \\ F(u) \end{pmatrix}, \text{     }iL= \begin{pmatrix} 0 & I \\ \Delta & 0 \end{pmatrix},
\eeq
so that \eqref{wave} may then be rewritten:
\beq w(t) = e^{itL}h + \int_0^te^{i(t-s)L}G(w(s))ds,
\eeq
or
\beq \label{scatter} e^{-itL}w(t)=h+\int_0^te^{-isL}G(w(s))ds,
\eeq
where $e^{itL} = \begin{pmatrix} \cos{tA} & A^{-1}\sin{tA} \\ -A\sin{tA} & \cos{tA} \end{pmatrix} $, $A = \sqrt{-\Delta}$. \\
We will investigate the convergence of \eqref{scatter} as $t\rightarrow +\infty$ and $t \rightarrow -\infty$.
 \eqref{scatter} implies:
\begin{eqnarray} & & e^{-it_2L}w(t_2) - e^{-it_1L}w(t_1) \nonumber \\ &=& \int_{t_1}^{t_2}e^{-isL}G(w(s))ds \nonumber \\ &=& \int_{t_1}^{t_2} \begin{pmatrix}-A^{-1}\sin{(sA)} F(u(s)) \\ \cos{(sA)} F(u(s)) \end{pmatrix}ds \nonumber \\ &=& 
\begin{pmatrix} \phi_{t_1 t_2} \\ \psi_{t_1 t_2} \end{pmatrix}. \nonumber
\end{eqnarray}
Now set
\beq H_{t_1t_2}(s)=F(s)\chi_{[t_1,t_2]}(s), \text{     }F(s)=F(s,x). \nonumber
\eeq
Then
\begin{eqnarray} \label{H} \int_{t_1}^{t_2}e^{-isA}F(s)ds &=& \int_{-\infty}^{\infty}e^{-isA}H_{t_1t_2}(s)ds \nonumber \\ &=& T^*H_{t_1t_2} \nonumber
\end{eqnarray}
for $T^*$ as in Section 2. We note that $T^*$ commutes with powers of $A$, and this together with Theorem 2.1 yields
\beq \label{T*scattermap} T^*:L^{\tilde{p}'}(\RR, H^{\sigma, \tilde{q}'}(M)) \longrightarrow H^{\sigma-\tilde{\gamma}, 2}(M)
\eeq
for all $\sigma \in \RR$ and $(\tilde{p}, \tilde{q}, \tilde{\gamma}) \in \mathcal{R} \cup \mathcal{E}$. Taking $(\frac{2(n+1)}{n-1}, \frac{2(n+1)}{n-1}, \frac{1}{2}) \in \mathcal{E}$ and $\sigma = \gamma -\frac{1}{2}$, we obtain
\begin{equation} \label{scattermap2} T^*: L^{\frac{2(n+1)}{n+3}}(\RR, H^{\gamma-\frac{1}{2}, \frac{2(n+1)}{n+3}}(M)) \longrightarrow H^{\gamma-1,2}(M).
\end{equation}
This yields 
\begin{eqnarray} & &\Bigl \| \int_{t_1}^{t_2} e^{-isA}F(s)ds \Bigr \| _{H^{\gamma-1,2}(M)} \\ &\lesssim& 
|| T^* H_{t_1 t_2} ||_{H^{\gamma-1,2}(M)} \nonumber \\  &\lesssim&
||F||_{L^{\frac{2(n+1)}{n+3}}([t_1, t_2], H^{\gamma-\frac{1}{2}, \frac{2(n+1)}{n+3}}(M))} \nonumber.
\end{eqnarray}
From Section 3 we know that the right hand side is bounded above by $2 N_0$ which is in turn bounded above by the small norm of the initial data. Hence, we may say that 
\beq || \phi_{t_1 t_2} ||_{H^{\gamma, 2}(M)}, || \psi_{t_1 t_2} ||_{H^{\gamma-1, 2}(M)} \longrightarrow 0
\eeq
as $t_1, t_2 \longrightarrow \pm \infty$. Thus 
\beq \label{Cauchy} e^{-itL}w(t) \text{ is Cauchy in } H^{\gamma, 2}(M) \oplus H^{\gamma-1,2}(M)\\
 \text{ as either } t \rightarrow \infty \text{ or } t\rightarrow -\infty.
\eeq
From \eqref{Cauchy} and the fact that $\{ e^{itL}: t \in \RR \}$ is a uniformly bounded family of operators on $ H^{\gamma, 2}(M) \oplus H^{\gamma-1,2}(M)$, we have the following scattering result:
\begin{theorem}In the setting of Theorem 3.1, with $\gamma$ as in \eqref{gammab}, $(f,g) \in H^{\gamma,2}(M) \oplus H^{\gamma-1,2}(M)$ with sufficiently small norm, and $u$ the solution to \eqref{wave}, there exist
\beq (\phi_{\pm}, \psi_{\pm}) \in H^{\gamma,2}(M) \oplus H^{\gamma-1,2}(M)
\eeq such that 
\beq \Bigl \| \begin{pmatrix} u(t) \\ u_t (t) \end{pmatrix} - e^{itL} \begin{pmatrix} \phi_{\pm} \\ \psi_{\pm} \end{pmatrix} \Bigr \|_{H^{\gamma,2}(M) \oplus H^{\gamma-1,2}(M)} \longrightarrow 0 \text{ as } t \longrightarrow \pm \infty.
\eeq
\end{theorem}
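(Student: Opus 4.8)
The plan is to observe that the analytic heart of the matter has already been settled in the estimates preceding the statement: the bound coming from \eqref{scattermap2} together with the Section~3 estimate on $\|F\|_{L^{\frac{2(n+1)}{n+3}}(\RR, H^{\gamma-\frac{1}{2}, \frac{2(n+1)}{n+3}}(M))}$ shows that $e^{-itL}w(t)$ is Cauchy in the energy space $H^{\gamma,2}(M)\oplus H^{\gamma-1,2}(M)$ as $t\to\pm\infty$, which is exactly \eqref{Cauchy}. What remains is a two-step functional-analytic argument: first extract the scattering data as the limit of $e^{-itL}w(t)$ by completeness, and then transfer the convergence of $e^{-itL}w(t)$ into convergence of $w(t)$ toward $e^{itL}(\phi_\pm,\psi_\pm)$ using that the free group is uniformly bounded on the energy space.

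For the first step, I would use that $H^{\gamma,2}(M)\oplus H^{\gamma-1,2}(M)$ is a Hilbert space, hence complete, so the Cauchy property \eqref{Cauchy} guarantees that the limits
\[
(\phi_\pm,\psi_\pm) := \lim_{t\to\pm\infty} e^{-itL}w(t)
\]
exist in $H^{\gamma,2}(M)\oplus H^{\gamma-1,2}(M)$; membership in the space is automatic, since the limit is taken in the space itself. By construction, $\|e^{-itL}w(t)-(\phi_\pm,\psi_\pm)\|_{H^{\gamma,2}\oplus H^{\gamma-1,2}}\to 0$ as $t\to\pm\infty$.

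For the second step, I would write
\[
w(t)-e^{itL}\begin{pmatrix}\phi_\pm\\\psi_\pm\end{pmatrix} = e^{itL}\Bigl(e^{-itL}w(t)-\begin{pmatrix}\phi_\pm\\\psi_\pm\end{pmatrix}\Bigr),
\]
so that the left-hand norm is bounded by $\sup_t\|e^{itL}\|_{\mathrm{op}}$ times the quantity from the first step, which tends to $0$. The uniform bound $\sup_t\|e^{itL}\|_{\mathrm{op}}<\infty$ on the energy space follows from spectral calculus for $A=\sqrt{-\Delta}$: each matrix entry of $e^{itL}$ is a bounded function of $A$ that is uniformly bounded in $t$ — here it is essential that on $H^n$ the spectrum of $-\Delta$ lies in $[(n-1)^2/4,\infty)$, so that $A^{-1}$ is bounded while $\cos tA$ and $\sin tA$ are contractions — and since these entries commute with the fractional powers defining the Sobolev norms, boundedness on $H^{\gamma,2}\oplus H^{\gamma-1,2}$ with a constant independent of $t$ results.

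Because essentially all of the genuine difficulty was discharged in producing \eqref{Cauchy}, the only point requiring care in this final step is the uniform-in-$t$ operator bound. I would emphasize that this rests on the spectral gap of the Laplacian on hyperbolic space, which is precisely the feature absent on $\RR^n$ (where $A^{-1}$ is unbounded at low frequency); in the present setting the gap renders the verification routine, so I do not anticipate a serious obstacle beyond stating it carefully.
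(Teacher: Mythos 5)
Your proposal is correct and follows essentially the same route as the paper: the paper likewise takes \eqref{Cauchy} as the substantive input, defines $(\phi_\pm,\psi_\pm)$ as the limits of $e^{-itL}w(t)$ by completeness, and invokes the uniform boundedness of $\{e^{itL}\}$ on $H^{\gamma,2}(M)\oplus H^{\gamma-1,2}(M)$ to conclude. Your added justification of that uniform bound via the spectral gap of $-\Delta$ on $H^n$ (so that $A^{-1}$ is bounded) is a correct elaboration of a point the paper simply asserts.
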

\section{Wave Operators}
Having analyzed the asymptotic behavior of solutions to \eqref{wave} as $t\longrightarrow \pm \infty$, we will now define wave operators and prove their existence in this context.  \\
\\From the previous section, we know that, given the Cauchy problem \eqref{wave}, it is possible to find initial data $\begin{pmatrix} \phi_{\pm} \\ \psi_{\pm} \end{pmatrix}$ that, when acted upon by the linear operator 
\beq \label{Sn} S_n(t) = e^{itL}= \begin{pmatrix} \cos{tA} & A^{-1}\sin{tA} \\ -A\sin{tA} & \cos{tA} \end{pmatrix}, \nonumber \eeq 
\beq A = \sqrt{-\Delta}, \nonumber
\eeq
yields a solution asymptotically close to that of \eqref{wave} as $t \longrightarrow \pm \infty$.  Now we posit an inverse problem:  Given $\begin{pmatrix} \phi_{\pm} \\  \psi_{\pm} \end{pmatrix}$ as initial data, is it possible to obtain a solution to \eqref{wave}?  In other words, we ask if there exist well-defined operators
\beq \label{woperator-} W_-: \begin{pmatrix} \phi_- \\ \psi_- \end{pmatrix} \longrightarrow \begin{pmatrix} u \\ u_t \end{pmatrix} 
\eeq 
and
\beq \label{woperator+} W_+: \begin{pmatrix} \phi_+ \\ \psi_+ \end{pmatrix} \longrightarrow \begin{pmatrix} u\\ u_t \end{pmatrix}.
\eeq
If $W_-$ and $W_+$ exist, we call them wave operators.  It turns out that in this context we can indeed find wave operators, provided $\begin{pmatrix} \phi_{\pm} \\ \psi_{\pm} \end{pmatrix}$ lie in the space $H^{\gamma,2} \oplus H^{\gamma-1,2}$ and have sufficiently small norm. The relevant theorem is as follows:
\begin{theorem} In the setting of Theorem 3.1, there exists an $\epsilon_0$ with the following property:  For $\phi_- \in H^{\gamma,2}(M)$ and $\psi_- \in H^{\gamma-1,2}(M)$ with 
\beq ||\phi||_{H^{\gamma,2}(M)}, ||\psi||_{H^{\gamma-1,2}(M)} \leq \epsilon_0
\eeq
the equation
\beq \label{waveop} w(t) = e^{itL} \begin{pmatrix} \phi_- \\ \psi_- \end{pmatrix}+ \int_{-\infty}^t e^{i(t-s)L} G(w(s)) ds
\eeq
has global solution, satisfying $w = (u, \partial_t u)$, with
\beq u \in L^{\frac{2(n+1)}{n-1}}(\RR, H^{\gamma-\frac{1}{2}, \frac{2(n+1)}{n-1}}(M)) \cap L^q(\RR \times M)
\eeq
where $q = \frac{(n+1)(b-1)}{2}$.
\end{theorem}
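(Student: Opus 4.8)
The plan is to run the same Picard iteration and contraction-mapping argument used in the proof of Theorem 3.1, but now with the solution operator built around the integral equation \eqref{waveop}, whose Duhamel term runs from $-\infty$ to $t$ rather than from $0$ to $t$. Concretely, I would work on the same complete metric space $\mathfrak{X}$ of \eqref{X}, with the metric \eqref{distance}, and define iterates $\{w_i\}$ by $w_{-1}\equiv 0$ and
$$w_{i+1}(t) = e^{itL}\begin{pmatrix}\phi_-\\ \psi_-\end{pmatrix} + \int_{-\infty}^t e^{i(t-s)L} G(w_i(s))\, ds.$$
Reading off the first component, this is exactly the scheme of Theorem 3.1 except that the free solution $u_0=\Xi_0(f,g)$ is replaced by the first component of $e^{itL}(\phi_-,\psi_-)$, namely $\cos(tA)\phi_- + A^{-1}\sin(tA)\psi_-$, and $V$ is replaced by the retarded operator $\tilde V f(t,x) = \int_{-\infty}^t \frac{\sin((t-s)A)}{A} f(s,x)\,ds$. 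The target regularity is the same, and the second component will automatically be $\partial_t u$ by the matrix structure of $e^{itL}$ and $G$.

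Next I would check that the two structural inputs of the Theorem 3.1 argument survive these replacements. For the free term, the homogeneous estimate of Theorem 2.1 applies verbatim to $\cos(tA)\phi_- + A^{-1}\sin(tA)\psi_-$, using the commutativity of the half-wave propagators with powers of $A$ exactly as for $\Xi_0$; hence the analogue of $N_0$ is finite and controlled by $\|\phi_-\|_{H^{\gamma,2}(M)} + \|\psi_-\|_{H^{\gamma-1,2}(M)}$, i.e.\ by the small data norm. For the nonlinear term, I would establish that $\tilde V$ enjoys the same mapping properties as $V$, that is, that the conclusion of Corollary 2.1 holds with $V$ replaced by $\tilde V$.

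The transfer to $\tilde V$ is the one genuinely new point. The full-line operator $f \mapsto \int_{-\infty}^\infty \frac{\sin((t-s)A)}{A} f(s)\, ds$ is controlled by the $T$ and $T^*$ bounds of Theorem 2.1, and the retarded truncation $\int_{-\infty}^t$ then inherits the same bound by the Christ--Kiselev lemma -- which is precisely why Theorem 2.2 carries the off-diagonal hypothesis $(p,\tilde p)\neq(2,2)$. Since the relevant exponents $\frac{2(n+1)}{n-1}$ and $\frac{2(n+1)}{n+3}$ are strictly away from the endpoint $2$, Christ--Kiselev applies and gives $\tilde V$ the Corollary 2.1 estimate. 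With this in hand, the boundedness step (the analogue of \eqref{Mm}, i.e.\ $N_{m+1}\le N_0 + N_m^b$, proved via Lemma 3.1 and the Leibniz rule) and the contraction estimate \eqref{contract} go through word for word, since both depend only on the mapping property of the Duhamel operator and on the pointwise nonlinear bounds, not on the lower limit of integration. This yields a fixed point $w=(u,\partial_t u)\in\mathfrak{X}$ with $u$ in the asserted space.

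Finally, to justify calling the resulting map a wave operator I would confirm that the solution genuinely scatters to the prescribed data, i.e.\ $e^{-itL}w(t)\to(\phi_-,\psi_-)$ in $H^{\gamma,2}(M)\oplus H^{\gamma-1,2}(M)$ as $t\to-\infty$; this follows from the $T^*$ estimate \eqref{scattermap2} together with the tail bound on $\|F\|_{L^{2(n+1)/(n+3)}((-\infty,t],\,H^{\gamma-\frac12,\,2(n+1)/(n+3)}(M))}$ going to zero, exactly as in Section 4. The main obstacle I anticipate is precisely the convergence of the improper Duhamel integral and the clean transfer of the Strichartz bound from the full line to the half-line $(-\infty,t]$; once Christ--Kiselev settles that, the remainder is a transcription of the proof of Theorem 3.1.
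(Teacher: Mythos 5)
Your proposal follows essentially the same route as the paper: rerun the Picard iteration of Theorem 3.1 on the space $\mathfrak{X}$ with the free evolution of $(\phi_-,\psi_-)$ in place of $\Xi_0(f,g)$ and the retarded Duhamel operator $\int_{-\infty}^t$ in place of $\int_0^t$, the only new ingredient being that this modified $V$ satisfies the same Strichartz mapping property. The paper simply asserts that Theorem 2.2 ``may be trivially extended'' to this case, whereas you correctly identify Christ--Kiselev (and the non-endpoint condition $(p,\tilde p)\neq(2,2)$) as the mechanism behind that extension, which is a welcome precision rather than a departure.
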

\begin{proof} Solving \eqref{waveop} is equivalent to solving
\beq u(t)=(\cos t A) \phi_- + A^{-1} (\sin t A) \psi_- + \int_{-\infty}^t A^{-1} \sin (t-s) A F(u(s)) ds.
\eeq
As before we can find a solution via an interation argument on the space $\mathfrak{X}$ in \eqref{X}, making use of the Leibniz rule for fractional derivatives and the Strichartz estimates of section 2. The only difference is that here we have 
\beq \int_{-\infty}^t A^{-1} \sin (t-s) A F(u(s)) ds = VF (v) (t)
\eeq
where this $V$ is like the $V$ in \eqref{V}, but with $\int_0^t$ replaced by $\int_{-\infty}^t $. The proof of Theorem 2.2 may be trivially exended to include this case, giving the desired result. 
\end{proof}
Having $ w = (u, u_t)$, we now estimate the difference:
\begin{equation} \label{diff}  e^{-itL}w(t)  - \begin{pmatrix} \phi_- \\ \psi_- \end{pmatrix} = \int_{-\infty}^t \begin{pmatrix} -A^{-1} \sin sA F(u(s)) \\ \cos s A F(u(s)) \end{pmatrix} ds.
\end{equation}
Parallel to \eqref{H}, we have, for all real $\sigma$, 
\beq \Bigl \| \int_{-\infty}^t e^{isA}F(s) ds \Bigr \|_{H^{\sigma, 2}} = || T^* H_t ||_{H^{\sigma, 2}}
\eeq
where
\beq H_t (s,x) = \chi_{(-\infty,t]}(s) F(s,x).
\eeq
Setting $\sigma = \gamma -1$ and noting again that $(\frac{2(n+1)}{n-1}, \frac{2(n+1)}{n-1}, \frac{1}{2}) \in \mathcal{E}$, we apply \eqref{T*scattermap} to obtain
\begin{eqnarray} \Bigl \| \int_{-\infty}^t e^{-isA}F(s) ds \Bigr \|_{H^{\gamma-1,2}} \lesssim ||F(u) ||_{L^{\frac{2(n+1)}{n+3}}( (-\infty, t], H^{\gamma-\frac{1}{2}, \frac{2(n+1)}{n+3}}(M))}.
\end{eqnarray}
The right-hand side here may be bounded above (using Lemma 3.1) by the norm of the initial data. We may then write the right-hand side of \eqref{diff} as
\beq \begin{pmatrix} \phi(t) \\ \psi(t) \end{pmatrix},
\eeq
and we have
\beq ||\phi(t)||_{H^{\gamma,2}(M)} + ||\psi(t)||_{H^{\gamma-1,2}(M)} \lesssim ||F(u) ||_{L^{\frac{2(n+1)}{n+3}}( (-\infty, t], H^{\gamma-\frac{1}{2}, \frac{2(n+1)}{n+3}}(M))} \longrightarrow 0,
\eeq
as $t \rightarrow -\infty$. Hence we have the conclusion:
\begin{theorem} In the setting of Theorem 5.1, there exists $\epsilon_0 >0$ such that if $\phi_-$ and $\psi_-$ are chosen satisfying $|| \phi_-||_{H^{\gamma,2}(M)} \leq \epsilon_0$ and $||\psi_-||_{H^{\gamma-1,2}(M)} \leq \epsilon_0 $, then \eqref{waveop} has a solution $w = (u, u_t)$, with $u \in L^{\frac{2(n+1)}{n-1}}(\RR, H^{\gamma-\frac{1}{2}, \frac{2(n+1)}{n-1}}(M)) \cap L^q(\RR \times M)$, with $\gamma = \frac{n}{2}-\frac{2}{b-1}$ and $q = \frac{(n+1)(b-1)}{2}$, and
\beq \Bigl \| \begin{pmatrix} u(t) \\ u_t (t) \end{pmatrix} - e^{itL}\begin{pmatrix} \phi_- \\ \psi_-\end{pmatrix} \Bigr \|_{H^{\gamma,2}(M) \oplus H^{\gamma-1,2}(M)} \longrightarrow 0 \text{ as } t \rightarrow -\infty.
\eeq
\end{theorem}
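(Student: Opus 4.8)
The plan is to read Theorem 5.2 as the union of two assertions, each of which is essentially already in hand. The existence of a solution $w=(u,u_t)$ to \eqref{waveop} with $u$ in the stated space is exactly Theorem 5.1, so I take the same $\epsilon_0$ and the same iteration space $\mathfrak{X}$, and nothing new is required for that part. The content beyond Theorem 5.1 is the limit as $t\to-\infty$, which I would obtain by the computation already displayed above the theorem. Subtracting $e^{itL}\begin{pmatrix}\phi_-\\\psi_-\end{pmatrix}$ from both sides of \eqref{waveop} and using $e^{i(t-s)L}=e^{itL}e^{-isL}$ gives
\[
\begin{pmatrix} u(t)\\ u_t(t)\end{pmatrix}-e^{itL}\begin{pmatrix}\phi_-\\\psi_-\end{pmatrix}
= e^{itL}\int_{-\infty}^t e^{-isL}G(w(s))\,ds
= e^{itL}\begin{pmatrix}\phi(t)\\\psi(t)\end{pmatrix},
\]
so by the uniform boundedness of $\{e^{itL}\}$ on $H^{\gamma,2}(M)\oplus H^{\gamma-1,2}(M)$ the difference I must control is bounded by a constant times $\|\phi(t)\|_{H^{\gamma,2}(M)}+\|\psi(t)\|_{H^{\gamma-1,2}(M)}$.

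To bound that quantity I would apply the mapping property \eqref{T*scattermap}, exactly as in the paragraph preceding the theorem: writing each component of the integral as $T^*$ acting on the truncation $H_t(s,x)=\chi_{(-\infty,t]}(s)F(u(s,x))$ and choosing $(\tfrac{2(n+1)}{n-1},\tfrac{2(n+1)}{n-1},\tfrac12)\in\mathcal{E}$ with $\sigma=\gamma-\tfrac12$, one obtains
\[
\|\phi(t)\|_{H^{\gamma,2}(M)}+\|\psi(t)\|_{H^{\gamma-1,2}(M)}
\lesssim \|F(u)\|_{L^{\frac{2(n+1)}{n+3}}((-\infty,t],\,H^{\gamma-\frac12,\,\frac{2(n+1)}{n+3}}(M))}.
\]
Thus the whole theorem reduces to showing that this nonlinear tail norm tends to $0$ as $t\to-\infty$.

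I would establish that in two steps. First, finiteness of the full-line norm $\|F(u)\|_{L^{2(n+1)/(n+3)}(\RR,\,H^{\gamma-1/2,\,2(n+1)/(n+3)}(M))}$: since $\gamma-\tfrac12=\tfrac{n-1}{2}-\tfrac{2}{b-1}$, this is verbatim the nonlinear estimate of Section 3. I apply Lemma 3.1 and the Leibniz rule to $|u|^b$, distribute the $\tfrac{n-1}{2}-\tfrac{2}{b-1}$ derivatives over the $b$ factors as in \eqref{afterleibniz}--\eqref{adds}, and use $u\in\mathfrak{X}$ (the bound $N_m\le 2N_0$) so that each factor is finite, bounding the full-line norm by a constant times $N_0^{\,b}$. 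Second, granting that the integrand $s\mapsto\|F(u)(s)\|_{H^{\gamma-1/2,2(n+1)/(n+3)}(M)}^{2(n+1)/(n+3)}$ is then an $L^1(\RR)$ function, the tail $\int_{-\infty}^t$ of a convergent integral tends to $0$ as $t\to-\infty$, which is precisely the vanishing of the $L^{2(n+1)/(n+3)}((-\infty,t],\cdot)$ norm. Combining the two steps yields the claimed convergence.

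I expect no deep obstacle: the single genuine inequality is the finiteness in the first step, and it is identical to the boundedness estimate already carried out in Section 3, so the only real care is bookkeeping---confirming that the smoothness index $\gamma-\tfrac12$ and the integrability index $\tfrac{2(n+1)}{n+3}$ coincide with those in \eqref{afterleibniz}--\eqref{adds}, and that the Leibniz distribution keeps every $p_j$ inside the admissible window \eqref{ps}. The passage to the limit is then soft, depending only on absolute continuity of the integral, and requires no Strichartz input beyond \eqref{T*scattermap} and the uniform boundedness of the propagator $e^{itL}$.
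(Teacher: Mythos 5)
Your proposal is correct and follows essentially the same route as the paper: existence is delegated to Theorem 5.1, the difference is rewritten via the Duhamel/$T^*$ identity and estimated through the mapping property \eqref{T*scattermap} with $(\tfrac{2(n+1)}{n-1},\tfrac{2(n+1)}{n-1},\tfrac12)\in\mathcal{E}$, and the vanishing of the tail norm follows from finiteness of the full-line norm of $F(u)$ (via Lemma 3.1 and the Section 3 bookkeeping) together with absolute continuity of the integral. You in fact spell out two points the paper leaves implicit here, namely the uniform boundedness of $e^{itL}$ needed to pass from $e^{-itL}w(t)-(\phi_-,\psi_-)$ to the stated difference, and the tail-of-a-convergent-integral argument, but these are exactly what the paper intends.
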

We can, of course, obtain a similar result for $t \rightarrow \infty$, through a trivial modification of the preceding arguments.

\end{document}